\documentclass{article}
\usepackage{amscd}
\usepackage{amsmath}
\usepackage{amssymb}
\usepackage{amsthm}
\usepackage{ascmac}
\usepackage{bm}
\usepackage{graphicx}
\usepackage{here}
\usepackage[pdftex]{hyperref}%
\usepackage{mathtools}\numberwithin{equation}{section}
\usepackage{minitoc}
\usepackage{multicol}
\usepackage{multirow}
\usepackage{setspace}
\theoremstyle{definition}
\newtheorem{df}{Definition}

\newtheorem{eg}{Example}
\newtheorem{rmk}{Remark}
\theoremstyle{theorem}

\newtheorem{thm}{Theorem}

\newtheorem{lem}{Lemma}

\title{Horizontal miniatures and normal-sized miniatures\\
of convex lattice polytopes}
\author{Takashi HIROTSU}
\date{\today}
\begin{document}
\maketitle
\begin{abstract}
Let $n,$ $d,$ and $r$ be integers such that $0 \leq r \leq d \leq n,$ and let $P \subset \mathbb R^n$ be a $d$-dimensional convex lattice polytope. 
In this article, we prove that the ratio of the $r$-dimensional volume of a normal-sized miniature of $P$ to that of $P$ is given by $1:\binom{d+r+1}{r},$ which generalizes the author's previous results on the volumes of the unit hypercube and lattice simplices in the case where $r = d = n.$ 
This theorem is proven by establishing that the number of horizontal miniatures of $P$ with resolution $t$ is a polynomial of degree $d+1$ in $t$ whose leading coefficient is $\mathrm{vol}\,P/(d+1),$ which is derived from Ehrhart theory.
\end{abstract}
\section{Introduction}
Let $n$ and $r$ be integers such that $0 \leq r \leq n.$ 
For each polytope $P \subset \mathbb R^n,$ let $\mathrm{vol}_r(P)$ denote the $r$-dimensional volume of $P.$ 
We recall the following definition.
\begin{df}[{\cite[Definitions 1--3 and 5]{Hir25}}]\label{df-mini}
Let $P \subset \mathbb R^n$ be a $d$-dimensional polytope.
\begin{enumerate}
\item[(1)]
We call a polytope $M \subset \mathbb R^n$ a {\itshape miniature} of $P$ if $M$ is contained in $P$ and similar to $P,$ allowing a scale factor of $0.$ 
In particular, when the scale factor is $0,$ the miniature degenerates into a single point, which is referred to as a {\itshape degenerate} miniature.
\item[(2)]
A miniature $M$ of $P$ is said to be {\itshape horizontal} if $M$ is transformed into $P$ by translation and nonnegative integral rescaling.
\end{enumerate}\par
Let $M$ be a miniature of $P,$ and let $t > 0$ be an integer.
\begin{enumerate}
\item[(3)]
We call $M$ a miniature of $P$ with {\itshape resolution} $t$ if all vertices of $M$ lie in $(t^{-1}\mathbb Z)^n.$
\item[(4)]
Let $\mathcal H_t(P)$ (resp.~$\mathcal H_t^+(P)$) denote the set of all horizontal miniatures (resp.~nondegenerate horizontal miniatures) of $P$ with resolution $t.$ 
If the limit 
\[\mu_{r}^{\mathrm{nl}}(P) := \lim_{t \to \infty}\frac{\sum_{M \in \mathcal H_t^+(P)}\mathrm{vol}_r(M)}{\#\mathcal H_t^+(P)}\] 
exists in $\mathbb R,$ then $P$ is said to be {\itshape $\mu_{r}^{\mathrm{nl}}$-measurable}. 
If $P$ is $\mu_{d}^{\mathrm{nl}}$-measurable, then a miniature of $P$ with volume $\mu_{d}^{\mathrm{nl}}(P)$ is called a {\itshape normal-sized miniature} of $P.$
\end{enumerate}
\end{df}
\begin{rmk}
Definition \ref{df-mini} updates the definition of a miniature introduced in the author's previous work \cite{Hir25} by including the degenerate case.
\end{rmk}
The following theorems have been previously established by the author.
\begin{thm}[{\cite[Theorem 2]{Hir22}}]\label{thm-nl-cube}
Let $P = [0,1]^d.$ 
Then $P$ is $\mu_{d}^{\mathrm{nl}}$-measurable and 
\[\mu_{d}^{\mathrm{nl}}(P) = \binom{2d+1}{d}^{-1}.\]
\end{thm}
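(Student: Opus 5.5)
Write $P=[0,1]^d$. I would prove the statement by describing $\mathcal H_t^+(P)$ explicitly and then computing the two sums in the limit directly.

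A horizontal miniature of $P$ has the form $M=a+sP$. Here $s\ge 0$, and since $M$ is carried to $P$ by translation and nonnegative integral rescaling, $s=1/k$ for a positive integer $k$ in the nondegenerate case. Resolution $t$ means every vertex $a+s\varepsilon$, $\varepsilon\in\{0,1\}^d$, lies in $(t^{-1}\mathbb Z)^d$. This is equivalent to $a\in(t^{-1}\mathbb Z)^d$ together with $s\in t^{-1}\mathbb Z$. Containment in $P$ means $0\le a_i$ and $a_i+s\le 1$ for every $i$.

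So, writing $a=b/t$ and $s=m/t$ with integers $b_i,m$, the nondegenerate horizontal miniatures of resolution $t$ correspond to pairs $(m,b)$ satisfying
\[
m\ge 1,\qquad m\mid t \ \text{(so that } s=1/k\text{)},\qquad 0\le b_i\le t-m .
\]

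The divisibility condition needs care. I would first check against \cite{Hir22} whether the integrality requirement means $t/m\in\mathbb Z$, or only that rescaling by a nonnegative integer maps $M$ onto $P$. A nonnegative integral rescaling of $M$ has side $ks$, and this equals $1$ only if $s=1/k$; that forces $m\mid t$. With that constraint the counts would behave arithmetically, like divisor sums, and the limit would not give the clean binomial value. The clean answer $\binom{2d+1}{d}^{-1}$ instead comes from letting $m$ range freely over $1,\dots,t$. I would therefore read ``rescaling'' as allowing the scale $s=m/t$ with $m$ an integer, which is how the abstract's Ehrhart-polynomial framing reads. I would state this interpretation explicitly at the start of the proof.

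Under that reading we have
\[
\#\mathcal H_t^+(P)=\sum_{m=1}^{t}(t-m+1)^d=\sum_{j=1}^{t} j^d\sim\frac{t^{d+1}}{d+1},
\]
and
\[
\sum_{M\in\mathcal H_t^+(P)}\mathrm{vol}_d(M)=\sum_{m=1}^t\Big(\frac{m}{t}\Big)^d (t-m+1)^d .
\]
Dividing the second sum by $t^{d+1}$ gives a Riemann sum, which converges to
\[
\int_0^1 x^d(1-x)^d\,dx=B(d+1,d+1)=\frac{(d!)^2}{(2d+1)!}.
\]
Taking the ratio of the two asymptotics,
\[
\mu_d^{\mathrm{nl}}(P)=(d+1)\frac{(d!)^2}{(2d+1)!}=\frac{d!\,(d+1)!}{(2d+1)!}=\binom{2d+1}{d}^{-1}.
\]
This limit exists, so $P$ is $\mu_d^{\mathrm{nl}}$-measurable and has the claimed value.

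The main obstacle is pinning down the admissible scales: whether $m$ ranges over all of $1,\dots,t$ or only over divisors of $t$. After that, the Riemann-sum convergence is routine, since the summand is uniformly continuous on $[0,1]$ and the boundary corrections are $O(t^{d})$.
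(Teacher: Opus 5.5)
Your argument is correct and is essentially the paper's route specialized to the cube. The paper obtains this statement as the case $r=d$, $P=[0,1]^d$ of Theorem~\ref{thm-nl-conv-r}: it counts $\mathcal E(P;t-i)=(t-i+1)^d$ translates at each scale $i/t$ and evaluates the weighted sum with the Beta-integral asymptotic of Lemma~\ref{lem-sum-prod}, exactly as you do. Your reading of the admissible scales (all $m\in\{1,\dots,t\}$, not only divisors of $t$) is also the paper's, since Theorem~\ref{thm-nl-no}(1) identifies horizontal miniatures of scale factor $i/t$ with gigantures $iP+a\subset tP$ for every $i\in\{0,1,\dots,t\}$.
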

\begin{thm}[{\cite[Theorem 3]{Hir25}}]\label{thm-nl-simplex}
Let $P \subset \mathbb R^d$ be a $d$-dimensional lattice simplex. 
Then $P$ is $\mu_{d}^{\mathrm{nl}}$-measurable and 
\[\mu_{d}^{\mathrm{nl}}(P) = \binom{2d+1}{d}^{-1}\mathrm{vol}_d(P).\]
\end{thm}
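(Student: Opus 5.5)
The plan is to describe $\mathcal H_t(P)$ explicitly as a family of dilated translates of $P$, and to estimate both $\#\mathcal H_t^+(P)$ and $\sum_{M\in\mathcal H_t^+(P)}\mathrm{vol}_d(M)$ as Riemann sums of order $t^{d+1}$. Their ratio then becomes a Beta integral.

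\textbf{Step 1: describe the horizontal miniatures.} Let $v_0,\dots,v_d \in \mathbb Z^d$ be the vertices of $P$. A horizontal miniature has the form $M = sP + x$ with $s \ge 0$ and $x\in\mathbb R^d$. Its vertices $sv_i + x$ all lie in $(t^{-1}\mathbb Z)^d$ exactly when two conditions hold.
\begin{itemize}
\item The translation satisfies $sv_0 + x \in t^{-1}\mathbb Z^d$.
\item The scaled edges satisfy $s(v_i - v_0) \in t^{-1}\mathbb Z^d$ for all $i$.
\end{itemize}
The second condition forces $s \in (tg)^{-1}\mathbb Z$ for some positive integer $g$ depending only on $P$, namely a common denominator coming from the edge vectors. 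I would check this carefully, but only the fact that the admissible scales form a set $\{j/(tg) : 0 \le j \le tg\}$, possibly thinned by a fixed congruence condition, matters below. The containment $sP + x \subset P$ holds iff $x \in P \ominus sP$. For a simplex one has $P \ominus sP = (1-s)P$: each facet inequality $\langle a_k, y\rangle \le b_k$ of $P$ must hold at the vertex of $sP$ maximizing $\langle a_k,\cdot\rangle$. Hence, for an admissible scale $s$, the miniatures of scale $s$ correspond bijectively to the points of $\bigl((1-s)P + sv_0\bigr)\cap t^{-1}\mathbb Z^d$, after shifting the translation to the vertex $sv_0+x$.

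\textbf{Step 2: count lattice points uniformly.} The number $N(s,t)$ of such points equals $\mathrm{vol}_d(P)(1-s)^d t^d + O(t^{d-1})$, uniformly in $s\in[0,1]$. This is the standard Ehrhart/lattice-point estimate for dilates and translates of a fixed polytope, since the boundary contributes $O(t^{d-1})$ uniformly. The degenerate case $s=0$ contributes only $O(t^d)$ miniatures, so it is negligible.

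\textbf{Step 3: sum and take the ratio.} Summing over the admissible scales $s = j/(tg)$ gives
\[
\#\mathcal H_t^+(P) \sim \mathrm{vol}_d(P)\, t^{d}\cdot tg\int_0^1 (1-s)^d\,ds = \frac{g\,\mathrm{vol}_d(P)}{d+1}\,t^{d+1}.
\]
When $g=1$ this matches the claimed leading coefficient $\mathrm{vol}\,P/(d+1)$. Similarly,
\[
\sum_{M\in\mathcal H_t^+(P)}\mathrm{vol}_d(M) \sim g\,\mathrm{vol}_d(P)^2\, t^{d+1}\int_0^1 s^d(1-s)^d\,ds .
\]
The factor $g$ and the power $t^{d+1}$ cancel in the ratio. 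The Beta integral gives $\int_0^1 s^d(1-s)^d\,ds = d!\,d!/(2d+1)!$. Therefore the limit is
\[
\mathrm{vol}_d(P)\,(d+1)\frac{d!\,d!}{(2d+1)!} = \binom{2d+1}{d}^{-1}\mathrm{vol}_d(P),
\]
which establishes $\mu_d^{\mathrm{nl}}$-measurability and the stated value.

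\textbf{Main obstacle.} The delicate point is Step 1: pinning down exactly which scales $s$ are admissible, given that the edge vectors need not be primitive, and verifying that the translation lattice is the same for every admissible scale. I would first try to show that the admissible set is an arithmetic progression of step $1/(tg)$. The Riemann-sum argument is robust to that step size, and even to a fixed periodic thinning, because any such factor cancels between numerator and denominator. The identity $P\ominus sP=(1-s)P$ is special to simplices, which is why the simplex case is the natural first case.
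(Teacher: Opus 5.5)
Your proof is correct. It follows the same skeleton as the paper, which obtains this statement as the case $r=d=n$ of Theorem~\ref{thm-nl-conv-r}: split by scale factor, use $P\ominus sP=(1-s)P$ to identify the miniatures of a given scale with lattice points in a dilate of $P$, and turn numerator and denominator into Riemann sums whose ratio tends to $(d+1)\,\frac{d!\,d!}{(2d+1)!}\,\mathrm{vol}_d(P)$.

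The difference is in the counting step. The paper only uses scale factors $i/t$, since a horizontal miniature is $t^{-1}(iP+a)$ with $a\in\mathbb Z^n$. So scale $i/t$ contributes exactly $\mathcal E(P;t-i)$, the denominator is the exact polynomial $\mathcal E(\mathrm{Pyr}\,P;t-1)$, and nothing beyond Ehrhart's theorem and Lemma~\ref{lem-sum-prod} is needed. You admit every scale in $(tg)^{-1}\mathbb Z$. This produces non-integral dilates $t(1-s)P$ shifted by the non-lattice vectors $tsv_0$, so Ehrhart polynomiality no longer applies. You must instead use the estimate $\#((\lambda P+z)\cap\mathbb Z^d)=\lambda^d\mathrm{vol}_d(P)+O(1+\lambda^{d-1})$, uniform in $z\in\mathbb R^d$ and $\lambda\ge 0$. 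In exchange, the cancellation of $g$ shows the limit is the same whether or not non-primitive edge vectors are allowed to generate these finer scales.

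Your ``main obstacle'' is harmless. We have $ts(v_i-v_0)\in\mathbb Z^d$ for all $i$ iff $ts\in g^{-1}\mathbb Z$, with $g$ the gcd of all coordinates of the edge vectors, so no thinning occurs. The $s$-dependent coset $t^{-1}\mathbb Z^d-sv_0$ of translations is absorbed by the uniformity in $z$.

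Your closing remark, however, is false. The identity $P\ominus sP=(1-s)P$ holds for every convex polytope, indeed every convex body, and your own facet argument never uses that $P$ is a simplex. This is precisely what lets the paper treat all convex lattice polytopes and all $r\le d$.
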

The main results of this article are as follows, which are proven in Section \ref{sec-nl-conv}.
\begin{thm}\label{thm-nl-conv-r}
Let $P \subset \mathbb R^n$ be a $d$-dimensional convex lattice polytope, and let $r \leq d.$ 
Then $P$ is $\mu_{r}^{\mathrm{nl}}$-measurable and 
\begin{equation} 
\mu_{r}^{\mathrm{nl}}(P) = \binom{d+r+1}{r}^{-1}\mathrm{vol}_r(P). \label{eq-nl-conv-r} 
\end{equation} 
\end{thm}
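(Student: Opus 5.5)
Horizontal miniatures of $P$ with resolution $t$ are the sets $kt^{-1}P + t^{-1}v$, where $k \in \{0,1,\dots,t\}$ and $v \in \mathbb Z^n$, that are contained in $P$. Since $P$ is a lattice polytope, every vertex of such a set automatically lies in $(t^{-1}\mathbb Z)^n$. Conversely, a horizontal miniature $M = sP + w$ with vertices in $(t^{-1}\mathbb Z)^n$ and $s = k/t$ should be of this form. The subtle point is that $s$ need only be a nonnegative rational with $st$ not forced to be an integer a priori. To settle it, I would read ``nonnegative integral rescaling'' as in \cite{Hir25}: $M = t^{-1}(kP + v)$ with $k \in \mathbb Z_{\geq 0}$, $v\in\mathbb Z^n$. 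With that normalization, the count is
\[\#\mathcal H_t(P) = \sum_{k=0}^{t} \#\{v \in \mathbb Z^n : kP + v \subset tP\}.\]
For fixed $k$, the set of admissible translations is $\{v : v + kP \subset tP\}$, which is the Minkowski difference $tP \ominus kP$. For a convex polytope, $tP \ominus kP = (t-k)P + c$ when $P$ is fixed (indeed $aP \ominus bP = (a-b)P$ for convex $P$, $a\ge b$). So the number of $v$ is $\#\bigl((t-k)P \cap (\mathbb Z^n - \text{shift})\bigr)$. Here $v$ ranges over lattice points $x$ of $(t-k)P$ once we write $v = x$ with $kP+x\subset tP \iff x \in (t-k)P$. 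The reference point is the origin, consistent because $kP + x \subset kP + (t-k)P = tP$, and the converse follows by support functions. Hence $\#\{v\} = L_P(t-k)$, the Ehrhart polynomial of $P$ with respect to the lattice $\mathbb Z^n \cap \mathrm{aff}$. Thus
\[\#\mathcal H_t(P) = \sum_{j=0}^{t} L_P(j),\]
which is a polynomial in $t$ of degree $d+1$ with leading coefficient $\mathrm{vol}(P)/(d+1)$, where vol is the relative (lattice-normalized) volume. The degenerate miniatures number $L_P(t)=O(t^d)$, so $\#\mathcal H_t^+(P)\sim \mathrm{vol}(P)\,t^{d+1}/(d+1)$ as well.

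Next, I would compute the numerator. A miniature with index $k$ has $\mathrm{vol}_r = (k/t)^r \mathrm{vol}_r(P)$, since $\mathrm{vol}_r$ (the $r$-dimensional Hausdorff measure) is homogeneous of degree $r$. This holds for any $r\le d$, including $r<d$, where $\mathrm{vol}_r(P)=0$ and the claim is trivial. So
\[\sum_{M\in\mathcal H_t^+}\mathrm{vol}_r(M) = \mathrm{vol}_r(P)\, t^{-r}\sum_{k=1}^{t} k^r L_P(t-k).\]
Using $L_P(j) = \mathrm{vol}(P) j^d + O(j^{d-1})$, a Riemann-sum argument gives
\[t^{-r}\sum_{k} k^r L_P(t-k) = \mathrm{vol}(P)\, t^{d+1}\int_0^1 x^r(1-x)^d\,dx + O(t^{d}) = \mathrm{vol}(P)\,t^{d+1}\frac{r!\,d!}{(d+r+1)!} + O(t^d).\]
Dividing by $\#\mathcal H_t^+(P)$ and letting $t\to\infty$ then yields
\[\mu_r^{\mathrm{nl}}(P) = \mathrm{vol}_r(P)\,(d+1)\frac{r!\,d!}{(d+r+1)!} = \binom{d+r+1}{r}^{-1}\mathrm{vol}_r(P).\]
The normalizing volume $\mathrm{vol}(P)$ cancels, so the choice of lattice normalization is irrelevant.

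The main obstacle is the first step: justifying that horizontal miniatures with resolution $t$ correspond exactly to pairs $(k,x)$ with $x\in (t-k)P\cap\mathbb Z^n$. Two things need care. One is the identity $\{x : kP+x\subset tP\} = (t-k)P$, which I would prove via support functions: $h_{kP}+\langle u,x\rangle\le h_{tP}$ for all $u$ iff $x\in (t-k)P$. The other is ruling out non-integral scale factors $k/t$ and non-lattice translations in the definition. If the definition permits a miniature $sP+w$ with vertices in $(t^{-1}\mathbb Z)^n$ but $st\notin\mathbb Z$, I would show such cases contribute $O(t^d)$ or are excluded by ``integral rescaling''. The remaining steps are routine Ehrhart asymptotics.
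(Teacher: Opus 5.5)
Your route is the paper's route. The Minkowski-difference identity $\{x : kP+x\subset tP\}=(t-k)P$ gives $\mathcal E(P;t-k)$ translates at scale $k/t$. Summing over $k$ gives a count of degree $d+1$ with leading coefficient $\mathrm{vol}(P)/(d+1)$. The numerator comes from degree-$r$ homogeneity and the Beta-integral Riemann sum of Lemma~\ref{lem-sum-prod}. Your remark that the Ehrhart leading coefficient cancels is a useful extra precaution when $d<n$.

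However, your treatment of $r<d$ is wrong. You identify $\mathrm{vol}_r$ with $r$-dimensional Hausdorff measure and assert that for $r<d$ one has $\mathrm{vol}_r(P)=0$, so the claim is trivial. In fact the $r$-dimensional Hausdorff measure of a $d$-dimensional polytope is $+\infty$ when $r<d$. It is $0$ only when $r>d$, which is the trivial case in the paper's remark. So under your reading the limit would not exist in $\mathbb R$ for $r<d$. In this paper, $\mathrm{vol}_r(P)$ for $r<d$ is the total $r$-dimensional volume of the $r$-dimensional faces of $P$. That is how the examples obtain $\mathrm{vol}_1=2+\sqrt 2$ for the unit triangle and $\mathrm{vol}_r([0,1]^d)=\binom{d}{r}2^{d-r}$. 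This quantity is finite and positive, and the case $r<d$ is exactly the new content of the theorem, not a triviality.

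The repair is short and leaves the rest of your computation intact. For $s>0$, the map $x\mapsto sx+w$ carries the $r$-dimensional faces $F$ of $P$ bijectively onto those of $sP+w$. Hence $\mathrm{vol}_r(sP+w)=\sum_F \mathrm{vol}_r(sF+w)=s^r\,\mathrm{vol}_r(P)$, which is the only property of $\mathrm{vol}_r$ your numerator calculation uses.

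Separately, discard your fallback that miniatures $sP+w$ with vertices in $(t^{-1}\mathbb Z)^n$ but $st\notin\mathbb Z$ contribute only $O(t^d)$. For $P=[0,2]$, the subsegments $[a/t,b/t]$ with $0\le a<b\le 2t$ and $b-a$ odd number $t(t+1)$, which is of order $t^{d+1}$. The count $\sum_k \mathcal E(P;t-k)$ is correct only under the normalization $M=t^{-1}(kP+v)$ with $k\in\{0,\dots,t\}$ and $v\in\mathbb Z^n$. That is precisely what the paper takes ``by definition'' in the proof of Theorem~\ref{thm-nl-no}(1), and you should adopt it outright rather than hedge.
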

\begin{rmk}
The formula \eqref{eq-nl-conv-r} is also valid for the case where $d < r,$ since $\mathrm{vol}_r(M) = \mathrm{vol}_r(P) = 0$ for any horizontal miniature $M$ of $P$ in this case.
\end{rmk}
The first few values of the binomial coefficient $\binom{d+r+1}{r}$ appearing in \eqref{eq-nl-conv-r} are given in Table \ref{tbl-binom}.
\begin{table}[h]
\centering
\begin{tabular}{c|ccccccc} 
$d \backslash r$ & 0 & 1 & 2 & 3 & 4 & 5 & 6 \\ \hline 
$0$ & $1$ \\ 
$1$ & $1$ & $3$ \\ 
$2$ & $1$ & $4$ & $10$ \\ 
$3$ & $1$ & $5$ & $15$ & $35$ \\ 
$4$ & $1$ & $6$ & $21$ & $56$ & $126$ \\ 
$5$ & $1$ & $7$ & $28$ & $84$ & $210$ & $462$ \\ 
$6$ & $1$ & $8$ & $36$ & $120$ & $330$ & $792$ & $1716$ 
\end{tabular}
\caption{The values of $\binom{d+r+1}{r}$ for $0 \leq r \leq d \leq 6.$}\label{tbl-binom}
\end{table}\par
Theorem \ref{thm-nl-conv-r} yields the following results.
\begin{thm}\label{thm-nl-inv}
Let $P \subset \mathbb R^n$ be a $d$-dimensional convex lattice polytope, and let $r \leq d.$ 
Then $\mu_{r}^{\mathrm{nl}}(P)$ is invariant under lattice equivalence.
\end{thm}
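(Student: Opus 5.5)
The plan is to derive the statement directly from Theorem \ref{thm-nl-conv-r}, so that no new counting of horizontal miniatures is needed. Let $\phi\colon \mathbb R^n \to \mathbb R^n$ be a lattice equivalence, that is, an affine map $x \mapsto Ux + b$ with $U \in GL_n(\mathbb Z)$ unimodular and $b \in \mathbb Z^n$. Then $Q := \phi(P)$ is again a convex lattice polytope. It has the same dimension $d$, since $\phi$ is an affine bijection and so preserves the dimension of the affine hull. Hence Theorem \ref{thm-nl-conv-r} applies to both $P$ and $Q$ with the same pair $(d,r)$. This gives
\[
\mu_r^{\mathrm{nl}}(P) = \binom{d+r+1}{r}^{-1}\mathrm{vol}_r(P), \qquad \mu_r^{\mathrm{nl}}(Q) = \binom{d+r+1}{r}^{-1}\mathrm{vol}_r(Q).
\]
Because the binomial factor depends only on $d$ and $r$, the claim reduces to showing $\mathrm{vol}_r(P) = \mathrm{vol}_r(Q)$.

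The key step is the invariance of $\mathrm{vol}_r$ itself. For $r = d = n$ it is immediate: $|\det U| = 1$, so $\phi$ preserves Lebesgue measure. In general, I would read $\mathrm{vol}_r$ as the volume normalized with respect to the lattice. For $r = d$, this is the relative volume of $P$ with respect to the lattice $\mathrm{aff}(P) \cap \mathbb Z^n$, which is exactly the normalization behind the leading Ehrhart coefficient mentioned in the abstract. For $r < d$, it is the sum of the relative volumes of the $r$-dimensional faces of $P$, each measured against the lattice in its own affine span.

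Under this reading the invariance is straightforward. First, $\phi$ maps faces of $P$ bijectively to faces of $Q$ and preserves their dimensions. Second, for each face $F$, the map $\phi$ sends $\mathrm{aff}(F)\cap\mathbb Z^n$ isomorphically onto $\mathrm{aff}(\phi F)\cap\mathbb Z^n$. A fundamental domain of the first lattice is therefore carried to a fundamental domain of the second, so relative volumes agree. Equivalently, the Ehrhart quasi-counts $\#(tF\cap\mathbb Z^n)$ are preserved by $\phi$, and the relative volume is their leading coefficient.

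The main obstacle I expect is this normalization issue. For $r < n$, the Euclidean $r$-dimensional volume is \emph{not} preserved by unimodular maps. For example, the shear $(x,y)\mapsto(x+y,y)$ sends the segment from $0$ to $(0,1)$ onto the segment from $0$ to $(1,1)$, changing its length from $1$ to $\sqrt2$. So before writing the proof I would check which convention for $\mathrm{vol}_r$ is used in the proof of Theorem \ref{thm-nl-conv-r}. It should be the lattice-normalized one, because that is what the Ehrhart leading coefficient $\mathrm{vol}\,P/(d+1)$ produces. I would then state that convention explicitly, so that the equality $\mathrm{vol}_r(P)=\mathrm{vol}_r(\phi(P))$, and hence the theorem, holds.
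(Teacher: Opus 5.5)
Your reduction is exactly the paper's argument: its proof of this theorem just combines Theorem~\ref{thm-nl-conv-r} with ``the fact that $\mathrm{vol}_r(P)$ is invariant under lattice equivalence.'' The obstacle you flag is genuine, and the paper does not address it. The paper's own examples fix $\mathrm{vol}_r$ as Euclidean volume: perimeter $2+\sqrt2$ for the standard triangle, and $(3+\sqrt3)/2$ and $3+3\sqrt2$ for the standard tetrahedron. Under that convention the cited ``fact'' fails whenever $0<r<n$, and so does the statement. For example, the unimodular map $(x,y)\mapsto(x+2y,y)$ carries $\mathrm{conv}\{(0,0),(1,0),(0,1)\}$ to $\mathrm{conv}\{(0,0),(1,0),(2,1)\}$. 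Theorem~\ref{thm-nl-conv-r} then gives $\mu_1^{\mathrm{nl}}=(2+\sqrt2)/4$ for the first and $(1+\sqrt2+\sqrt5)/4$ for the second. Your sheared segment likewise gives $1/3$ versus $\sqrt2/3$. With Euclidean $\mathrm{vol}_r$ the theorem holds only for $r=0$ and $r=d=n$.

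Your reason for expecting the lattice-normalized convention is not right, however. In the proof of Theorem~\ref{thm-nl-conv-r}, the leading Ehrhart coefficient (which is indeed a relative volume) appears in both $\#\mathcal H_t^+(P)$ and the volume sum, so it cancels. The quantity $\mathrm{vol}_r(P)$ enters only through $\mathrm{vol}_r(M)=(i/t)^r\mathrm{vol}_r(P)$. That identity holds for any translation-invariant $r$-volume that is homogeneous of degree $r$. So Theorem~\ref{thm-nl-conv-r} holds under either convention and does not select one.

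What your argument actually proves is a corrected statement. To set it up, normalize each $r$-face $F$ by the lattice $\mathrm{lin}(F-F)\cap\mathbb Z^n$ of integer vectors parallel to it. Do not use $\mathrm{aff}(F)\cap\mathbb Z^n$: it can be empty for faces of the rational miniatures, and $\mu_r^{\mathrm{nl}}$ is defined through those miniatures. This normalization is still homogeneous of degree $r$, so Theorem~\ref{thm-nl-conv-r} applies. Your face-by-face transport of fundamental domains then gives the invariance. Present the result as a restriction or correction of the theorem, not as a reading of the paper's convention.
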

\begin{thm}\label{thm-nl-pie-r}
Let $P,$ $Q \subset \mathbb R^n$ be $d$-dimensional convex lattice polytopes, and let $r \leq d.$ 
Suppose that $P\cup Q$ and $P\cap Q$ are also convex lattice polytopes. 
Then $\mu_{r}^{\mathrm{nl}}$ satisfies the valuation property, namely, 
\begin{equation} 
\mu_{r}^{\mathrm{nl}}(P\cup Q) = \mu_{r}^{\mathrm{nl}}(P)+\mu_{r}^{\mathrm{nl}}(Q)-\mu_{r}^{\mathrm{nl}}(P\cap Q). \label{eq-nl-pie-r} 
\end{equation} 
\end{thm}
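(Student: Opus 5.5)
The plan is to reduce \eqref{eq-nl-pie-r} to the inclusion--exclusion property of the $r$-dimensional volume by means of Theorem \ref{thm-nl-conv-r}. First, I will pin down the dimensions of the polytopes involved.

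\emph{Dimension of $P\cup Q$.} This union is a convex set containing the $d$-dimensional polytope $P$, so its dimension $e$ satisfies $e\ge d$. If $e>d$, then $P\cup Q$ would have positive $e$-dimensional measure. However, $P$ and $Q$ each lie in a $d$-dimensional affine subspace, so their union has $e$-dimensional measure zero. Hence $P\cup Q$ is a $d$-dimensional convex lattice polytope, and moreover $P$, $Q$ and $P\cup Q$ share the same affine hull.

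\emph{Dimension of $P\cap Q$.} Let $d'=\dim(P\cap Q)\le d$.

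Theorem \ref{thm-nl-conv-r} then applies to $P$, $Q$ and $P\cup Q$, all with the same constant $c=\binom{d+r+1}{r}^{-1}$. It also applies to $P\cap Q$, with constant $c'=\binom{d'+r+1}{r}^{-1}$ when $r\le d'$. When $d'<r$, the remark after Theorem \ref{thm-nl-conv-r} gives $\mu_{r}^{\mathrm{nl}}(P\cap Q)=0$. In particular every term in \eqref{eq-nl-pie-r} is well defined.

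The argument now splits into two cases.

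\emph{Case $d'=d$.} All four constants coincide. After dividing by $c$, \eqref{eq-nl-pie-r} becomes
\[
\mathrm{vol}_r(P\cup Q)=\mathrm{vol}_r(P)+\mathrm{vol}_r(Q)-\mathrm{vol}_r(P\cap Q).
\]
This is the valuation property of the volume functional $\mathrm{vol}_r$, as used in Theorem \ref{thm-nl-conv-r}, restricted to polytopes lying in the common affine hull. For $r=d$ it is simply additivity of Lebesgue measure in that $d$-dimensional affine space, applied via $\mathbf 1_{P\cup Q}=\mathbf 1_P+\mathbf 1_Q-\mathbf 1_{P\cap Q}$.

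\emph{Case $d'<d$.} This is the degenerate situation in which $P$ and $Q$ meet along a lower-dimensional face, and I expect it to be the main obstacle. The reason is that the constant for $P\cap Q$ differs from that of the other three terms, so the identity cannot be obtained by simply factoring out $c$.

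For $r=d$ the case is easy. We have $\mathrm{vol}_d(P\cap Q)=0$, so $\mu_d^{\mathrm{nl}}(P\cap Q)=0$ by Theorem \ref{thm-nl-conv-r} (or by the remark when $d'<r$). The remaining identity $\mathrm{vol}_d(P\cup Q)=\mathrm{vol}_d(P)+\mathrm{vol}_d(Q)$ is additivity of measure, since $P\cap Q$ is null.

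For $r<d$, the plan is to show that the $P\cap Q$ term and the defect in the volume identity vanish simultaneously, using the precise convention for $\mathrm{vol}_r$ of a polytope of dimension exceeding $r$ from the main text. Concretely, I would check two things. First, $\mathrm{vol}_r(P\cap Q)$ contributes zero whenever $d'<d$. Second, $\mathrm{vol}_r(P\cup Q)=\mathrm{vol}_r(P)+\mathrm{vol}_r(Q)$ in that situation. If the convention makes $\mathrm{vol}_r$ vanish or behave additively across a lower-dimensional interface, the identity follows as in the previous case.

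If instead the convention attaches genuine $r$-dimensional mass to $P\cap Q$, I would fall back on a direct route that avoids Theorem \ref{thm-nl-conv-r} for this case. This route goes through the horizontal miniature counts themselves:
\[
\#\mathcal H_t(P\cup Q)=\#\mathcal H_t(P)+\#\mathcal H_t(Q)-\#\mathcal H_t(P\cap Q),
\]
and similarly for the sums of $\mathrm{vol}_r$ over miniatures, before passing to the limit. The difficulty here is that a horizontal miniature of $P\cup Q$ need not lie in $P$ or in $Q$. For this reason I expect the first approach, via the volume valuation, to be the intended one.
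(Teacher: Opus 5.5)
Your reduction is the paper's own. Its proof is a single line: combine Theorem~\ref{thm-nl-conv-r} with inclusion--exclusion for $\mathrm{vol}_r$, using the same constant $\binom{d+r+1}{r}^{-1}$ for all four polytopes. Your dimension count for $P\cup Q$ and your treatment of $r=d$ (including $\dim(P\cap Q)<d$) are correct and go beyond what the paper writes.

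The case you left open, $d'<d$ with $r<d$, cannot be closed, because the identity is false there. This holds under the paper's convention that $\mathrm{vol}_r$ is the total $r$-volume of the $r$-faces (perimeter $2+\sqrt2$ for the triangle, $\binom{d}{r}2^{d-r}$ for the cube). Take $P=[0,1]^2$, $Q=[1,2]\times[0,1]$ and $r=1$. Then $P\cup Q=[0,2]\times[0,1]$ and $P\cap Q=\{1\}\times[0,1]$ has $d'=1$, so Theorem~\ref{thm-nl-conv-r} gives
\[
\mu_1^{\mathrm{nl}}(P\cup Q)=\tfrac{6}{4}=\tfrac{3}{2},\qquad \mu_1^{\mathrm{nl}}(P)+\mu_1^{\mathrm{nl}}(Q)-\mu_1^{\mathrm{nl}}(P\cap Q)=1+1-\tfrac{1}{3}=\tfrac{5}{3}.
\]
Both facts you hoped to verify fail: $\mathrm{vol}_1(P\cap Q)=1$, and $\mathrm{vol}_1(P\cup Q)=6\neq 8$. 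The volume identity itself fails too ($6\neq 4+4-1$), so the paper's proof tacitly assumes $\dim(P\cap Q)=d$.

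Your fallback cannot rescue this. The counts $\#\mathcal H_t(R)=\sum_{i=0}^{t}\mathcal E(R;i)$ do in fact satisfy inclusion--exclusion, since lattice-point counts do, so no bijection is needed. But inclusion--exclusion for numerators and denominators separately does not survive passing to a limit of ratios.

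Even in your case $d'=d$, the step ``this is the valuation property of $\mathrm{vol}_r$'' is only asserted, exactly as in the paper, and for $r\le d-2$ it is false. Take $d=2$ and $r=0$, where $\mu_0^{\mathrm{nl}}$ is just the number of vertices. Let $P=\mathrm{conv}\{(0,0),(1,0),(1,1),(-1,1)\}$ and $Q=\mathrm{conv}\{(0,0),(-1,0),(-1,1),(1,1)\}$. Then $P\cup Q=[-1,1]\times[0,1]$ and $P\cap Q=\mathrm{conv}\{(0,0),(1,1),(-1,1)\}$ are $2$-dimensional lattice polygons, yet $4\neq 4+4-3$. The reason is that $(0,0)$ is a vertex of $P$, $Q$ and $P\cap Q$ but not of $P\cup Q$. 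Taking products with $[0,1]^{d-2}$ gives counterexamples for every $r\le d-2$.

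The inclusion--exclusion you need does hold in two cases:
\begin{itemize}
\item for $r=d$, as Lebesgue measure in the common affine hull;
\item for $r=d-1$ when $\dim(P\cap Q)=d$, since the total facet volume of a $d$-polytope is $2V_{d-1}$ and the intrinsic volume $V_{d-1}$ is a valuation.
\end{itemize}
So your argument, like the paper's, establishes the statement only in these cases. Beyond them, what is missing is an additional hypothesis, not a better proof.
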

To prove Theorem \ref{thm-nl-conv-r}, we utilize Ehrhart theory. 
For each bounded convex set $S \subset \mathbb R^n$ and integer $t \geq 0,$ the {\itshape lattice-point enumerator} $\mathcal E(S;t)$ for the $t$-th dilate $tS$ of $S$ is defined as 
\[\mathcal E(S;t) := \# (tS\cap \mathbb Z^n).\] 
\begin{thm}[{\cite[Theorem 3.8 and Corollaries 3.15, 3.17, and 3.20]{BR15}}]\label{thm-ehr-poly}
Let $P \subset \mathbb R^n$ be a $d$-dimensional convex lattice polytope. 
Then $\mathcal E(P;t)$ is a polynomial in $t$ of the form 
\[\mathcal E(P;t) = c_dt^d+c_{d-1}t^{d-1}+\cdots +c_1t+c_0,\] 
where $d!c_i \in \mathbb Z$ for any $i \in \{ 0,1,\dots,d\},$ $c_d = \mathrm{vol}_d(P),$ and $c_0 = 1.$
\end{thm}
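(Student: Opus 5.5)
The plan is to reduce everything to lattice simplices by triangulation, and to handle simplices with the cone/fundamental-parallelepiped generating function argument.

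\emph{Triangulation.} Triangulate $P$ into $d$-dimensional lattice simplices, using only the vertices of $P$. The relatively open faces $\Delta^\circ$ of all simplices of this triangulation partition $P$ disjointly, and this is preserved under dilation, since $tP$ is partitioned by the $t\Delta^\circ$. Hence for $t \geq 1$,
\[\mathcal E(P;t) = \sum_{\Delta} \mathcal E(\Delta^\circ;t),\]
where $\Delta$ runs over all faces of the triangulation, including the vertices as $0$-dimensional faces.

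\emph{Open simplices.} Fix a $k$-dimensional lattice simplex $\Delta$ with vertices $v_0,\dots,v_k$, where $k \leq d$. Consider the cone generated by the vectors $w_i=(v_i,1)\in\mathbb Z^{n+1}$. The lattice points at height $t$ in the interior of this cone are exactly the points of $t\Delta^\circ\cap\mathbb Z^n$. Every interior lattice point of the cone can be written uniquely as $p+\sum m_iw_i$ with $m_i\in\mathbb Z_{\geq 0}$ and $p$ in the half-open parallelepiped $\{\sum\lambda_iw_i : 0<\lambda_i\leq 1\}$. This gives
\[\sum_{t\geq 1}\mathcal E(\Delta^\circ;t)z^t=\frac{h(z)}{(1-z)^{k+1}},\]
where $h(z)=\sum_{j=1}^{k+1}h_jz^j$ has nonnegative integer coefficients counting parallelepiped points by height. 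Expanding the geometric series yields, for $t\geq1$,
\[\mathcal E(\Delta^\circ;t)=\sum_j h_j\binom{t-j+k}{k}.\]
This is a polynomial in $t$ of degree at most $k$, and multiplying by $k!$ makes all its coefficients integers. Summing over all faces then shows that $\mathcal E(P;t)$ agrees for $t \geq 1$ with a polynomial of degree at most $d$ whose coefficients lie in $\frac{1}{d!}\mathbb Z$, because $k!$ divides $d!$.

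\emph{The coefficients $c_d$ and $c_0$.} For the leading coefficient, $t^{-d}\mathcal E(P;t)$ is a Riemann sum for the indicator function of $P$ with respect to the lattice $\mathbb Z^n\cap\mathrm{aff}(P)$ (after translating to a vertex). Its limit is therefore the relative volume $\mathrm{vol}_d(P)$, with the normalization used in the paper, and so $c_d=\mathrm{vol}_d(P)\neq 0$ and the degree is exactly $d$.

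The constant term is the step I expect to be most delicate, because the formula above only holds for $t\geq1$. Evaluating the polynomial for $\Delta^\circ$ at $t=0$ gives $\sum_j h_j\binom{k-j}{k}$. The terms with $1\leq j\leq k$ vanish, and the term $j=k+1$ gives $h_{k+1}\binom{-1}{k}=(-1)^k h_{k+1}$. Here $h_{k+1}=1$, since the only parallelepiped point at height $k+1$ is $\sum w_i$. Hence
\[c_0=\sum_\Delta(-1)^{\dim\Delta},\]
which is the Euler characteristic of the triangulated convex polytope $P$, namely $1$. Finally, $\mathcal E(P;0)=1$ holds directly, because $0\cdot P=\{0\}$. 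The polynomial identity therefore holds for all $t\geq0$.
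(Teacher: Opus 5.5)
Your proof is correct. The paper gives no argument of its own and only cites Beck--Robins. You use the same machinery as that source: triangulation into lattice simplices, cones over simplices with their fundamental parallelepipeds, and the Riemann-sum limit for $c_d$. The difference is in how you organize it, mainly in how you get $c_0$.

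The cited route works with closed cones and half-open parallelepipeds $\{\sum\lambda_i w_i : 0\le\lambda_i<1\}$, so the apex of the cone accounts for $\mathcal E(P;0)=1$. One obtains $\sum_{t\ge 0}\mathcal E(P;t)z^t=h^*(z)/(1-z)^{d+1}$ with $\deg h^*\le d$. The expansion $\mathcal E(P;t)=\sum_j h^*_j\binom{t+d-j}{d}$ is then valid for every $t\ge 0$, and it gives $c_0=h^*_0=1$ and $d!\,c_i\in\mathbb Z$ directly.

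You instead partition $P$ into relatively open simplices and use open cones with parallelepipeds $(0,1]^{k+1}$. This avoids all inclusion--exclusion bookkeeping and gives polynomiality and integrality at once, but only for $t\ge 1$, since at $t=0$ every $t\Delta^\circ$ collapses to $\{0\}$ and the partition fails. You handle this correctly by evaluating each piece at $0$ to get $(-1)^{\dim\Delta}$; this is Ehrhart--Macdonald reciprocity at $t=0$ for a simplex. The price is an outside input: $\sum_\Delta(-1)^{\dim\Delta}=\chi(P)=1$ for an arbitrary triangulation of a convex polytope. That follows from the topological invariance of the Euler characteristic, or from the valuation property of $\chi$. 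It is a legitimate citation, but the Euler characteristic now enters your argument explicitly, whereas in the Ehrhart-series route the value at $t=0$ is built into the generating function.

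One remark on $c_d$. Your identification of $c_d$ with the paper's $\mathrm{vol}_d(P)$ as the relative, lattice-normalized volume is not just a convention; it is the only reading under which the statement is true. For $d<n$ the Euclidean $d$-volume is not the leading coefficient: the segment from $0$ to $(1,1)$ has $\mathcal E(P;t)=t+1$ but length $\sqrt2$. The paper's examples, by contrast, measure lower-dimensional faces in the Euclidean way. This does not affect the paper's main theorem, where $c_d$ cancels, but it should be stated explicitly.
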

The polynomial $\mathcal E(P;t)$ in Theorem \ref{thm-ehr-poly} is referred to as the {\itshape Ehrhart polynomial} of $P.$ 
We use the following definition and theorem.
\begin{df}
Let $P \subset \mathbb R^n$ be a convex lattice polytope, and let $t \geq 0$ be an integer.
\begin{enumerate}
\item[(1)]
We call a polytope $G \subset \mathbb R^n$ a {\itshape giganture} of $P$ in $tP$ if $G$ is contained in $tP$ and similar to $P,$ allowing a scale factor of $0.$ 
In particular, when the scale factor is $0,$ the giganture degenerates into a single point, which is referred to as a {\itshape degenerate} giganture.
\item[(2)]
A giganture $G$ of $P$ is said to be {\itshape horizontal} if $P$ is transformed into $G$ by translation and nonnegative integral rescaling (or equivalently, if $G = iP+a$ for some integer $i \geq 0$ and some vector $a \in \mathbb Z^n$).
\item[(3)]
Let $\mathcal H(P;t)$ (resp.~$\mathcal H^+(P;t)$) denote the number of horizontal gigantures (resp.~nondegenerate horizontal gigantures) of $P$ in $tP.$
\end{enumerate}
\end{df}
\begin{thm}\label{thm-nl-no}
Let $P \subset \mathbb R^n$ be a $d$-dimensional convex lattice polytope, and let $t > 0$ be an integer.
\begin{enumerate}
\item[\textup{(1)}]
For any $i \in \{ 0,1,\dots,t\},$ the number of horizontal miniatures of $P$ with resolution $t$ and scale factor $i/t,$ and the number of horizontal gigantures of $P$ in $tP$ with scale factor $i$ are both equal to $\mathcal E(P;t-i).$ 
\item[\textup{(2)}]
Let $V \subset \mathbb Z^n$ be the set of all vertices of $P,$ and let $\mathrm{Pyr}\,P$ denote the convex hull of $\{ (v,0) \mid v \in V\}\cup\{ (0,\dots,0,1)\}$ in $\mathbb R^{n+1}.$ 
Then $\#\mathcal H_t(P)$ and $\mathcal H(P;t)$ agree as polynomials in $t$ and are given by 
\begin{align} 
\#\mathcal H_t(P) &= \mathcal H(P;t) \notag \\ 
&= \sum_{i = 0}^{t}\mathcal E(P;i) \notag \\ 
&= \mathcal E(\mathrm{Pyr}\,P;t) \notag \\ 
&= a_{d+1}t^{d+1}+a_{d}t^{d}+\cdots +a_1t+a_0, \label{eq-nl-no} 
\end{align} 
where $(d+1)!a_i \in \mathbb Z$ for any $i \in \{ 0,1,\dots,d+1\},$ $a_{d+1} = \mathrm{vol}_d(P)/(d+1),$ and $a_0 = 1.$
\end{enumerate}
\end{thm}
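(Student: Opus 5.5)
The proof proceeds by an explicit bijection for part (1), and for part (2) by summing (1) and then invoking Theorem \ref{thm-ehr-poly} for the pyramid.

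\textbf{Part (1).} A horizontal miniature of $P$ with resolution $t$ and scale factor $i/t$ has the form $M = (i/t)P + b$. Since the vertices of $M$ must lie in $(t^{-1}\mathbb Z)^n$ and $iP$ is a lattice polytope, the condition is $b \in t^{-1}\mathbb Z^n$. (This needs care when $i=0$: then $M$ is the single point $b$, and the resolution condition again says $b \in t^{-1}\mathbb Z^n$.) Multiplying by $t$ gives a bijection $M \mapsto tM = iP + a$ with $a = tb \in \mathbb Z^n$. Under this bijection, miniatures contained in $P$ correspond exactly to horizontal gigantures of scale factor $i$ contained in $tP$, so the two counts agree. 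It remains to count lattice vectors $a$ with $iP + a \subseteq tP$. The key claim is
\[
iP + a \subseteq tP \iff a \in (t-i)P .
\]
The reverse direction follows from convexity: $iP + (t-i)P = tP$. For the forward direction, I would use support functions. Writing $h_P$ for the support function of $P$, the inclusion gives $i h_P(u) + \langle a,u\rangle \le t h_P(u)$ for every $u$, hence $\langle a,u\rangle \le (t-i)h_P(u)$ for every $u$. This says $a \in (t-i)P$, which is the Minkowski cancellation law for convex bodies. For $i = t$ the claim reads $a \in \{0\}$, consistent with $0P = \{0\}$. Therefore the count is $\#((t-i)P \cap \mathbb Z^n) = \mathcal E(P; t-i)$.

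\textbf{Part (2).} Summing part (1) over $i$ gives $\#\mathcal H_t(P) = \mathcal H(P;t) = \sum_{i=0}^t \mathcal E(P;t-i) = \sum_{i=0}^t \mathcal E(P;i)$. To identify this sum with $\mathcal E(\mathrm{Pyr}\,P;t)$, slice $t\,\mathrm{Pyr}\,P$ by the hyperplanes $x_{n+1} = k$ for $k = 0,\dots,t$. The slice at height $k$ is $(t-k)P \times \{k\}$, because $\mathrm{Pyr}\,P \cap \{x_{n+1} = s\} = (1-s)P \times \{s\}$, the apex being $(0,\dots,0,1)$. Lattice points of $\mathbb Z^{n+1}$ lie only on integer heights, so the slices give
\[
\mathcal E(\mathrm{Pyr}\,P;t) = \sum_{k} \mathcal E(P;t-k),
\]
which is the desired sum.

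Next, $\mathrm{Pyr}\,P$ is a $(d+1)$-dimensional convex lattice polytope: the apex lies off the affine hull of $P \times \{0\}$. Theorem \ref{thm-ehr-poly} then yields the polynomial form, the integrality $(d+1)!a_i \in \mathbb Z$, and $a_0 = 1$. The leading coefficient is $\mathrm{vol}_{d+1}(\mathrm{Pyr}\,P)$, the volume of a pyramid of height $1$ over the base $P$ measured within the $(d+1)$-dimensional affine span, which equals $\mathrm{vol}_d(P)/(d+1)$. Alternatively, the leading coefficient follows directly from the identity $\sum_{i \le t} c_d i^d \sim c_d t^{d+1}/(d+1)$.

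\textbf{Main obstacle.} I expect the subtle point to be the normalization of volume when $d < n$. Here $\mathrm{vol}_d$ must be the relative volume normalized to the lattice in the affine span; this is the convention under which $c_d = \mathrm{vol}_d(P)$ in Theorem \ref{thm-ehr-poly}. One must check that the pyramid's relative lattice normalization is compatible with this, i.e., that the lattice distance from the apex to the base is $1$. I would verify this by noting that the lattice of the affine span of $\mathrm{Pyr}\,P$ is the lattice of the affine span of $P$, times $\mathbb Z$ in the last coordinate. Using the summation argument for the leading coefficient avoids the issue entirely, and I would use that as the primary route.
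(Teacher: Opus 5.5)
Your proposal is correct and follows essentially the same route as the paper: the rescaling bijection $M\mapsto tM$ together with the Minkowski-difference identity $\{a\in\mathbb Z^n : iP+a\subseteq tP\}=(t-i)P\cap\mathbb Z^n$ for part (1), then summing over $i$, identifying the sum with $\mathcal E(\mathrm{Pyr}\,P;t)$, and applying Theorem \ref{thm-ehr-poly} to the $(d+1)$-dimensional lattice pyramid for part (2); your support-function and slicing arguments simply fill in details the paper leaves implicit. One small imprecision in your side remark is that the affine span of $\mathrm{Pyr}\,P$ is in general only a unimodular shear of $\mathrm{aff}(P)\times\mathbb R$ (its direction lattice is $(L\cap\mathbb Z^n)\times\{0\}\oplus\mathbb Z(-v_0,1)$, where $L$ is the direction space of $\mathrm{aff}(P)$ and $v_0$ is a vertex of $P$), but this still gives lattice height $1$, and your summation route for $a_{d+1}$ avoids the issue anyway.
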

\begin{rmk}
Theorem \ref{thm-nl-no} yields 
\[\#\mathcal H_t^+(P) = \mathcal H^+(P;t) = \mathcal H(P;t)-\mathcal E(P;t) = \sum_{i = 0}^{t-1}\mathcal E(P;i) = \mathcal E(\mathrm{Pyr}\,P;t-1).\]
\end{rmk}
\section{Horizontal Miniatures of Convex Lattice Polytopes}\label{sec-nl-no}
In this section, we prove Theorem \ref{thm-nl-no}. 
\begin{proof}[Proof of Theorem \ref{thm-nl-no}]
\begin{enumerate}
\item[(1)]
By definition, these two numbers trivially coincide. 
Let $a \in \mathbb Z^n.$ 
By the convexity of $P,$ the embedding condition $iP+a \subset tP$ is equivalent to the condition that the translation vector $a$ lies in the Minkowski difference $tP-iP = (t-i)P.$ 
Therefore, the desired numbers are equal to 
\[\#\{ a \in \mathbb Z^n \mid iP+a \subset tP\} = \# ((t-i)P\cap\mathbb Z^n) = \mathcal E(P;t-i).\] 
\item[(2)]
By definition, we have $\#\mathcal H_t(P) = \mathcal H(P;t).$ 
The assertion (1) yields 
\[\mathcal H(P;t) = \sum_{i = 0}^{t}\mathcal E(P;t-i) = \sum_{i = 0}^{t}\mathcal E(P;i) = \mathcal E(\mathrm{Pyr}\,P;t),\] 
which can be written in the form \eqref{eq-nl-no}, since $\mathrm{Pyr}\,P$ is a $(d+1)$-dimensional convex lattice polytope with volume $\mathrm{vol}_d(P)/(d+1)$ (see Figure \ref{fig}).
\end{enumerate}
\end{proof}
\begin{figure}[h]
\centering
\includegraphics{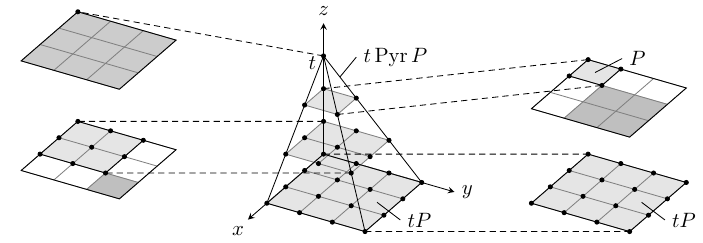}
\caption{The number $\mathcal H(P;t)$ in the case where $P = [0,1]^2$ and $t = 3.$}\label{fig}
\end{figure}
\begin{rmk}
The leading coefficient of $\mathcal H^+(P;t) = \mathcal E(\mathrm{Pyr}\,P;t-1)$ is also $\mathrm{vol}\,P/(d+1)$ and its constant term vanishes, as 
\[\mathcal E(\mathrm{Pyr}\,P;-1) = (-1)^{d+1}\mathcal E((\mathrm{Pyr}\,P)^\circ;1) = (-1)^{d+1}\# ((\mathrm{Pyr}\,P)^\circ\cap\mathbb Z^{n+1}) = 0\] 
by the Ehrhart--Macdonald reciprocity law (see \cite[Theorem 4.1]{BR15}).
\end{rmk}
\begin{eg}
\begin{enumerate}
\item[(1)]
If $P \subset \mathbb R^2$ is the triangle with vertices $(0,0),$ $(1,0),$ and $(0,1),$ then $\mathcal H(P;t)$ is given by 
\[\mathcal H(P;t) = \frac{(t+1)(t+2)(t+3)}{6} = \frac{1}{6}t^3+t^2+\frac{11}{6}t+1,\] 
which coincides with the $(t+1)$-th triangular pyramidal number.
\item[(2)]
If $P \subset \mathbb R^2$ is the unit square $[0,1]^2,$ then $\mathcal H(P;t)$ is given by 
\[\mathcal H(P;t) = \frac{(t+1)(t+2)(2t+3)}{6} = \frac{1}{3}t^3+\frac{3}{2}t^2+\frac{13}{6}t+1,\] 
which coincides with the $(t+1)$-th square pyramidal number.
\end{enumerate}
\end{eg}
\section{Normal-sized Miniatures of Convex Lattice Polytopes}\label{sec-nl-conv}
In this section, we prove Theorems \ref{thm-nl-conv-r}--\ref{thm-nl-pie-r}. 
We use the following lemma.
\begin{lem}\label{lem-sum-prod}
Let $p,$ $q > 0$ be integers. 
Then, for any integer $t > 0,$ we have 
\[\sum_{i = 0}^{t-1}i^p(t-i)^q = \sum_{i = 1}^{t}i^p(t-i)^q = \frac{p!q!}{(p+q+1)!}t^{p+q+1}+O(t^{p+q}),\] 
where $O$ denotes the Landau big-O symbol.
\end{lem}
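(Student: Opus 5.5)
The plan has three steps: show the two sums agree, compare the sum with the Beta integral $\int_0^1 x^p(1-x)^q\,dx$ as a Riemann sum, and bound the error by $O(t^{p+q})$.

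\emph{Equality of the two sums.} Since $p,q>0$, the $i=0$ term of the first sum vanishes because of the factor $i^p$. Likewise the $i=t$ term of the second sum vanishes because of the factor $(t-i)^q$. Hence both sums equal $\sum_{i=1}^{t-1}i^p(t-i)^q$.

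\emph{Asymptotics.} Expand $(t-i)^q$ by the binomial theorem and swap the order of summation:
\[
\sum_{i=1}^{t}i^p(t-i)^q=\sum_{k=0}^{q}\binom{q}{k}(-1)^k t^{q-k}\sum_{i=1}^{t}i^{p+k}.
\]
Faulhaber's formula gives $\sum_{i=1}^t i^{m}=\frac{t^{m+1}}{m+1}+O(t^m)$, since the sum is a polynomial in $t$ of degree $m+1$ with that leading coefficient. Substituting, each term with index $k$ contributes $\binom{q}{k}(-1)^k\frac{t^{p+q+1}}{p+k+1}+O(t^{p+q})$. The total leading coefficient is therefore
\[
\sum_{k=0}^{q}\binom{q}{k}\frac{(-1)^k}{p+k+1}=\int_0^1 x^p(1-x)^q\,dx=B(p+1,q+1)=\frac{p!\,q!}{(p+q+1)!}.
\]
The first equality comes from expanding $(1-x)^q$ inside the integral and integrating term by term. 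The last equality is the standard Beta-function identity. If I want to avoid quoting it, I can prove it by induction on $q$ using integration by parts: $\int_0^1 x^p(1-x)^q\,dx=\frac{q}{p+1}\int_0^1 x^{p+1}(1-x)^{q-1}\,dx$.

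\emph{Main obstacle.} Only the constant in the error term needs care, and it is harmless: there are finitely many $k$ (namely $q+1$ of them), so the $O(t^{p+q})$ remainders add up to a single $O(t^{p+q})$ term. The Beta evaluation is the substantive step, and it is handled by the induction above.
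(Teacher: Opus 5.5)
Your proof is correct, but it takes a different route from the paper. The paper rewrites the sum as $t^{p+q+1}\cdot\frac{1}{t}\sum_{i=1}^{t}(i/t)^p(1-i/t)^q$ and treats this as a right-endpoint Riemann sum for $\int_0^1 x^p(1-x)^q\,dx$. It asserts an $O(t^{-1})$ error, which tacitly uses that the polynomial integrand is Lipschitz on $[0,1]$. It then quotes $B(p+1,q+1)=p!q!/(p+q+1)!$, and it writes the equality of the two sums without comment.

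You instead expand $(t-i)^q$ binomially and apply Faulhaber's formula. This makes the sum an exact polynomial in $t$ of degree $p+q+1$, with leading coefficient $\sum_{k}\binom{q}{k}(-1)^k/(p+k+1)$. You then identify that coefficient with the Beta integral by integrating $x^p(1-x)^q$ term by term.

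Your route has three advantages. It is purely algebraic, so the error term needs no analytic estimate: it is just the lower-degree part of a polynomial. It proves the stronger fact that the sum is a polynomial in $t$. And you justify both the equality of the two sums (the boundary terms vanish because $p,q>0$) and the Beta evaluation (induction via integration by parts) rather than citing them.

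The paper's route is shorter and more transparent. There the Beta integral arises directly as the limit of the normalized sum. In your version it arises from an alternating sum whose individual terms each have order $t^{p+q+1}$ and reach the small constant $p!q!/(p+q+1)!$ only after cancellation.
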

\begin{proof}
By the standard Riemann sum approximation, we have 
\begin{align*} 
\sum_{i = 0}^{t-1}i^p(t-i)^q &= \sum_{i = 1}^{t}i^p(t-i)^q \\ 
&= t^{p+q+1}\cdot\frac{1}{t}\sum_{i = 1}^{t}\left(\frac{i}{t}\right) ^p\left( 1-\frac{i}{t}\right) ^q \\ 
&= t^{p+q+1}\left(\int_{0}^{1}x^p(1-x)^q\,dx+O(t^{-1})\right) \\ 
&= t^{p+q+1}(B(p+1,q+1)+O(t^{-1})) \\ 
&= \frac{p!q!}{(p+q+1)!}t^{p+q+1}+O(t^{p+q}), 
\end{align*} 
where $B(x,y)$ denotes the beta function.
\end{proof}
\begin{proof}[Proof of Theorem \ref{thm-nl-conv-r}]
Theorems \ref{thm-ehr-poly} and \ref{thm-nl-no}(2) imply 
\[\#\mathcal H_t^+(P) = \mathcal H^+(P;t) = \mathcal H(P;t)-\mathcal E(P;t) = \frac{\mathrm{vol}_d(P)}{d+1}t^{d+1}+O(t^d).\] 
Furthermore, Theorems \ref{thm-nl-no}(1) and \ref{thm-ehr-poly}, and Lemma \ref{lem-sum-prod} imply 
\begin{align*} 
\sum_{M \in \mathcal H_t^+(P)}\mathrm{vol}_r(M) &= \sum_{i = 1}^{t}\mathcal E(P;t-i)\left(\frac{i}{t}\right) ^{r}\mathrm{vol}_r(P) \\ 
&= \frac{\mathrm{vol}_r(P)}{t^r}\sum_{i = 1}^{t}i^{r}\mathcal E(P;t-i) \\ 
&= \frac{\mathrm{vol}_r(P)}{t^r}\sum_{i = 0}^{t-1}(t-i)^{r}\mathcal E(P;i) \\ 
&= \frac{\mathrm{vol}_d(P)\mathrm{vol}_r(P)}{t^r}\sum_{i = 0}^{t-1}(t-i)^r(i^d+O(i^{d-1})) \\ 
&= \frac{\mathrm{vol}_d(P)\mathrm{vol}_r(P)}{t^r}\left(\frac{d!r!}{(d+r+1)!}t^{d+r+1}+O(t^{d+r})\right) \\ 
&= \frac{d!r!}{(d+r+1)!}\mathrm{vol}_d(P)\mathrm{vol}_r(P)t^{d+1}+O(t^d). 
\end{align*} 
Combining these two asymptotic expansions yields 
\[\frac{\sum_{M \in \mathcal H_t^+(P)}\mathrm{vol}_r(M)}{\#\mathcal H_t^+(P)} \xrightarrow[t \to \infty]{} \frac{(d+1)!r!}{(d+r+1)!}\mathrm{vol}_r(P) = \binom{d+r+1}{r}^{-1}\mathrm{vol}_r(P).\] 
This completes the proof.
\end{proof}
\begin{rmk}
Alternatively, even if we define the volume limit by including the degenerate miniatures, i.e., by using $\mathcal H(P;t)$ instead of $\mathcal H^+(P;t),$ the resulting limit coincides with $\mu_r^{\mathrm{nl}}(P).$ 
This is because the measures of the single points in the numerator vanish, and the number of degenerate miniatures in the denominator grows only as a polynomial of degree $d,$ which is asymptotically dominated by the growth of degree $d+1$ of the nondegenerate ones.
\end{rmk}
\begin{proof}[Proof of Theorem \ref{thm-nl-inv}]
Combining Theorem \ref{thm-nl-conv-r} with the fact that $\mathrm{vol}_r(P)$ is invariant under lattice equivalence, we obtain the desired result.
\end{proof}
\begin{proof}[Proof of Theorem \ref{thm-nl-pie-r}]
By combining Theorem \ref{thm-nl-conv-r} with the valuation property for $\mathrm{vol}_r$, namely, 
\[\mathrm{vol}_r(P\cup Q) = \mathrm{vol}_r(P)+\mathrm{vol}_r(Q)-\mathrm{vol}_r(P\cap Q),\] 
we conclude that $\mu_{r}^{\mathrm{nl}}$ satisfies \eqref{eq-nl-pie-r}.
\end{proof}
Finally, we provide a few examples of the values of $\mu_{r}^{\mathrm{nl}}(P)$ in Theorem \ref{thm-nl-conv-r}.
\begin{eg}
\begin{enumerate}
\item[(1)]
Let $P \subset \mathbb R^2$ be the triangle with vertices $(0,0),$ $(1,0),$ and $(0,1).$ 
Then 
\begin{align*} 
\mu_{2}^{\mathrm{nl}}(P) &= \binom{5}{2}^{-1}\mathrm{vol}_2(P) = \frac{1}{10}\cdot\frac{1}{2} = \frac{1}{20} 
\intertext{and} 
\mu_{1}^{\mathrm{nl}}(P) &= \binom{4}{1}^{-1}\mathrm{vol}_1(P) = \frac{2+\sqrt 2}{4}. 
\end{align*} 
\item[(2)]
Let $P \subset \mathbb R^3$ be the tetrahedron with vertices $(0,0,0),$ $(1,0,0),$ $(0,1,0),$ and $(0,0,1).$ 
Then 
\begin{align*} 
\mu_{3}^{\mathrm{nl}}(P) &= \binom{7}{3}^{-1}\mathrm{vol}_3(P) = \frac{1}{35}\cdot\frac{1}{6} = \frac{1}{210}, \\ 
\mu_{2}^{\mathrm{nl}}(P) &= \binom{6}{2}^{-1}\mathrm{vol}_2(P) = \frac{1}{15}\cdot\frac{3+\sqrt 3}{2} = \frac{3+\sqrt 3}{30}, 
\intertext{and} 
\mu_{1}^{\mathrm{nl}}(P) &= \binom{5}{1}^{-1}\mathrm{vol}_1(P) = \frac{3+3\sqrt 2}{5}. 
\end{align*} 
\end{enumerate}
\end{eg}
\begin{eg}
Let $P = [0,1]^d.$ 
Since the total $r$-dimensional volume of the $r$-dimensional faces of $P$ is given by $\mathrm{vol}_r(P) = \binom{d}{r}2^{d-r}$ (see \cite[Section 4.4]{Gru03}), we have 
\begin{align*} 
\mu_{r}^{\mathrm{nl}}(P) &= \binom{d+r+1}{r}^{-1}\binom{d}{r}2^{d-r} \\ 
&= \frac{(d+1)!r!}{(d+r+1)!}\cdot\frac{d!}{r!(d-r)!}2^{d-r} \\ 
&= \frac{d!(d+1)!}{(d-r)!(d+r+1)!}2^{d-r}. 
\end{align*} 
\end{eg}


\begin{thebibliography}{9}
\bibitem{BR15}
M.~Beck and S.~Robins, {\itshape Computing the Continuous Discretely: Integer Point Enumeration in Polyhedra}, 2nd ed., Undergraduate Texts in Mathematics, Springer, New York, 2015. 
\bibitem{Gru03}
B.~Gr\"{u}nbaum, {\itshape Convex Polytopes}, 2nd ed., Graduate Texts in Mathematics, vol.~221, Springer, New York, 2003.
\bibitem{Hir22}
T.~Hirotsu, Normal-sized hypercuboids in a given hypercube, preprint, arXiv:2211.15342.
\bibitem{Hir25}
T.~Hirotsu, Average-sized miniatures and normal-sized miniatures of lattice polytopes, preprint, arXiv:2501.00459.
\end{thebibliography}
\end{document}